\newtheorem{thm}{Theorem}[section]
\newtheorem{lem}[thm]{Lemma}
\newtheorem{cor}[thm]{Corollary}
\newtheorem{prop}[thm]{Proposition}
\newtheorem{assump}[thm]{Assumption}
\theoremstyle{definition}
\newtheorem{defin}[thm]{Definition}
\newcommand{\lra}{\longrightarrow}
\newcommand{\ca}{\mathcal{A}}
\DeclareMathOperator{\id}{\rm id}
\DeclareMathOperator{\colim}{\rm colim}
\def\co{\colon\thinspace}
\DeclareMathOperator{\Ring}{\rm HoRing}
\DeclareMathOperator{\eRing}{\rm E_2-Ring}
\def\t{\mathbb{T}}
\def\c{\mathbb{C}}
\begin{document}
\title{The Segal Conjecture for topological Hochschild homology of Ravenel spectra}        

\author{Gabriel Angelini-Knoll}             
\email{gak	@math.fu-berlin.de}       
\address{Department of Mathematics \\ 
     Freie Universit\"at Berlin\\
         Germany\\
         Berlin\\
          Arnimalee 7, 14195}

\author{J.D. Quigley}            
\email{jdq27@cornell.edu}       
\address{Department of Mathematics\\ 	  
         Cornell University\\
   		U.S.A.\\
        Ithaca, NY \\
        580 Malott Hall, 14853}

\begin{abstract}
In the 1980's, Ravenel introduced sequences of spectra $X(n)$ and $T(n)$ which played an important role in the proof of the Nilpotence Theorem of Devinatz--Hopkins--Smith. In the present paper, we solve the homotopy limit problem for topological Hochschild homology of $X(n)$, which is a generalized version of the Segal Conjecture for the cyclic groups of prime order. This result is the first step towards computing the algebraic K-theory of $X(n)$ using trace methods, which approximates the algebraic K-theory of the sphere spectrum in a precise sense. We solve the homotopy limit problem for topological Hochschild homology of $T(n)$ under the assumption that the canonical map $T(n)\to BP$ of homotopy commutative ring spectra can be rigidified to map of $E_2$ ring spectra. We show that the obstruction to our assumption holding can be described in terms of an explicit class in an Atiyah-Hirzebruch spectral sequence. 
\end{abstract}



         
\maketitle

\tableofcontents

\section{Introduction}  
In the 1970's, Segal conjectured that after completion at the augmentation ideal, the Burnside ring of a finite group $G$ and the cohomotopy of $BG$ agree \cite{Ada82}. This conjecture inspired an outpouring of exciting research in the 1970's and early 1980's, leading to the resolution of the Segal conjecture for any finite group by Carlsson in \cite{Car84}. Let $p$ be a prime, let $C_p$ denote the cyclic group of order $p$, and let $S$ denote the $C_p$ equivariant sphere spectrum. The Segal Conjecture for $C_p$ may then be stated more generally as the question of whether the Tate-valued Frobenius map
\[ \varphi_p \colon \thinspace S\lra S^{tC_{p}}, \]
using the terminology of Nikolaus--Scholze \cite{NS18}, is an equivalence after $p$-adic completion. This version of the conjecture was resolved by Lin when $p=2$ and by Gunawardena when $p>2$ using an algebraic construction called the Singer construction \cite{LDMA80,AGM85}.

Given an $E_1$ ring spectrum $R$, the topological Hochschild homology of $R$, denoted $THH(R)$, has a canonical $\mathbb{T}$-action, where $\mathbb{T}\subset \mathbb{C}$ is the circle group, and consequently $THH(R)$ has a canonical $C_p$-action by restriction to the $p$-th roots of unity in $\mathbb{T}$. 
The Segal Conjecture for the group $C_p$ can then be rephrased as the question of whether the Tate-valued Frobenius map
\[ \varphi_p \colon \thinspace THH(S) \longrightarrow THH(S)^{tC_{p}},\] 
 is an equivalence after $p$-adic completion. Since there is a $C_p$ equivariant equivalence $THH(S)\overset{\simeq}{\to}S$ induced by tensoring with the $C_p$ equivariant collapse map $S^1\to *$ in the category of commutative ring spectra, this indeed implies the original Segal conjecture. 
 More generally, for a $E_1$ ring spectrum $R$, one may ask whether the Tate-valued Frobenius map
\begin{align}\label{eq1} \varphi_p \colon \thinspace THH(R) \longrightarrow THH(R)^{tC_{p}} \end{align} 
is an equivalence after $p$-adic completion. We therefore say that the Segal Conjecture for topological Hochschild homology of $R$ holds if the Tate-valued Frobenius map is an equivalence after $p$-adic completion, following Lun{\o}e-Nielsen--Rognes \cite{LNR11}. 

Notably, the Segal conjecture for topological Hochschild homology of $MU$ holds by \cite{LNR11}. The Segal conjecture for topological Hochschild homology of the Adams summand mod $(p,v_1)$ does not hold on the nose, but the Tate-valued Frobenius map induces an isomorphism in mod $(p,v_1)$ homotopy groups in sufficiently high degrees. This result was a crucial step in the calculation of mod $(p,v_1)$ topological cyclic homology of the Adams summand by Ausoni--Rognes \cite{AR02}. 

In the present paper, we prove the Segal Conjecture for topological Hochschild homology of the Ravenel spectra $X(k)$ holds for all $k\ge 1$. The spectra $X(k)$ are $E_2$ ring spectra that filter between the sphere spectrum and complex cobordism, in the sense that there is a sequence 
\[ S\to X(1)\to X(2) \to \dots \to X(k)\to X(k+1)\to \dots \to X(\infty)=MU\]
of maps of $E_2$ ring spectra. We refer to the maps
appearing in this sequence as the canonical maps $X(k)\to MU$ throughout. We also prove the Segal Conjecture for topological Hochschild homology of the Ravenel spectra $T(n)$ holds for all $n\ge 0$ under the hypothesis that the canonical maps $T(n)\to BP$ are map of $E_2$ ring spectra.\footnotemark \footnotetext{The notation we use for the Ravenel spectra $T(n)$ is the notation from \cite{Rav86}. We warn the reader that the same notation is used for the $v_n$ telescope of a type $n$ spectrum in \cite{Kuh07}.}
The Ravenel spectra $T(n)$ are known to split off of the $p$-localization of $X(p^n)$ in the same way that $BP$ splits off of $MU$ $p$-locally and there is a sequence 
\[ S =T(0)\to T(1) \to \dots \to T(n)\to T(n+1)\to \dots \to T(\infty)=BP\]
of homotopy commutative ring spectra.  Again, we refer to the maps $T(n)\to BP$ in this sequence as the canonical maps throughout. 
It is not known, however, whether this sequence rigidifies to a sequence of maps of $E_2$ ring spectra. We therefore include this as an assumption in our work and discuss this issue in more detail in Section \ref{T(n)}. 
\begin{thm}\label{main thm intro} Let $p$ be a fixed prime. 
\begin{enumerate}
\item[(a)] The Tate-valued Frobenius maps 
\[ \varphi_p \colon \thinspace THH(X(k)) \to THH(X(k))^{tC_p}\]
are equivalences after $p$-adic completion for each integer $k\ge 1$.
\item[(b)] Assuming each of the canonical maps $T(n)\to BP$ are maps of $E_2$ ring spectra, the Tate-valued Frobenius maps 
\[ \varphi_p \colon \thinspace  THH(T(n)) \to THH(T(n))^{tC_p}\]
are equivalences after $p$-adic completion for each $ n\ge 0$. 
\end{enumerate}
\end{thm}
The cases $k=1$, $n=0$, and $k=n=\infty$ were already known and we do not shed new light on these results. In fact, the case $k=1$ and $n=0$ is exactly the Segal conjecture for the sphere spectrum and the group $C_p$. The cases $k=\infty$ and $n=\infty$ are the Segal conjecture for topological Hochschild homology $MU$ and $BP$ respectively and they were proven by Lun{\o}e-Nielsen--Rognes \cite{LNR11}. 

As a consequence of Theorem \ref{main thm intro} and Tsalidis' theorem \cite{Tsa98,BBLNR07}, there are equivalences 
\begin{align}\label{Tsalidis cor} 
THH(R)^{C_{p^k}}\overset{\Gamma}{\lra} THH(R)^{hC_{p^k}} 
\end{align}
after $p$-adic completion when $R$ is $X(k)$ and $k\ge 1$ is an integer and $p$ any prime. Under our running assumptions on $T(n)$, we also prove that \eqref{Tsalidis cor} is an equivalence after $p$-adic completion when $R=T(n)$ 
 for each integer $n\ge 0$ and prime $p$. 
We define 
\[ TC^{-}(R;\mathbb{Z}_p):=\left ( THH(R)^{h\mathbb{T}} \right )^{\wedge}_p,\]
where $(-)^{\wedge}_p$ denotes $p$-adic completion and 
\[ TP(R;\mathbb{Z}_p):=\left ( THH(R)^{t\mathbb{T}} \right )^{\wedge}_p.\]
When $R$ is a connective $E_1$ ring spectrum, there is an equivalence
\begin{align}
\label{eq: equivalence} TP(R;\mathbb{Z}_p)\overset{\simeq}{\longrightarrow} (THH(R)^{tC_p})^{h\mathbb{T}}
\end{align}
by \cite[Lem. II.4.2]{NS18} and we abuse notation and write 
\[ \varphi_p \colon \thinspace TC^{-}(R;\mathbb{Z}_p)\longrightarrow  TP(R;\mathbb{Z}_p) \]
for the composite of $\varphi_p^{h\mathbb{T}}$ the with the inverse of the equivalence \eqref{eq: equivalence} in the homotopy category. As a consequence of Theorem \ref{main thm intro}, we therefore have the following corollary. 
\begin{cor}\label{main cor}
Let $p$ be a fixed prime. 
\begin{enumerate}
\item[(a)] There are equivalences 
\[ \varphi_p\colon \thinspace TC^{-}(X(k);\mathbb{Z}_p) \simeq   TP(X(k);\mathbb{Z}_p) \]
for each integer $k\ge 1$.
\item[(b)] Assuming the canonical maps $T(n)\to BP$ are maps of $E_2$ ring spectra for each $n$, then there are equivalences
\[ \varphi_p \colon \thinspace TC^{-}(T(n);\mathbb{Z}_p)\simeq TP(T(n);\mathbb{Z}_p)\] 
for each integer $n\ge 0$.
\end{enumerate}
\end{cor}

Following \cite{NS18}, we let 
\[ \text{can} \colon \thinspace TC^{-}(R)\to TP(R;\mathbb{Z}_p);\]
denote the map appearing in the isotropy separation diagram (see \cite[Eq. 35]{AR02} where this map is called $R^h$) post-composed with the map from $TP(R)$ to its $p$-adic completion.
The equalizer of the diagram 
\[  \xymatrix{   TC^{-}(R) \ar@<.5ex>[rr]^(.4){(\varphi_p)_{p\in \mathbb{P}}} \ar@<-.5ex>[rr]_(.4){(\text{can})_{p\in \mathbb{P}}} && \prod_{p\in \mathbb{P}}TP(R;\mathbb{Z}_p) }\]
in the homotopy category of spectra is equivalent to topological cyclic homology $TC(R)$ of $R$, when $R$ is connective by Nikolaus--Scholze \cite[Cor. 1.5]{NS18}. Corollary \ref{main cor} implies that $TC(X(n))$ is the fiber of the difference of the canonical map and an equivalence. Again, we emphasize that knowing that the maps $\varphi_p$ induce isomorphisms insufficiently high degrees for $R=H\mathbb{F}_p$ and $R=\ell$, where $\ell$ denotes the $p$ complete Adams summand is a crucial step in the calculations of $TC(H\mathbb{F}_p)$ and $TC(\ell)$ by Hesselholt--Madsen and Ausoni--Rognes respectively \cite{HM97,AR02}. 

By the Dundas--Goodwillie--McCarthy theorem \cite[Thm. 7.2.2.1]{DGM13}, there is a pullback 
\[ 
	\xymatrix{
	K(X(k))\ar[r]\ar[d]_{\text{tr}} & K(\mathbb{Z})\ar[d]^{\text{tr}} \\
	TC(X(k))\ar[r] & TC(\mathbb{Z}),
	}
\]
induced by the linearization map $X(k)\to H\mathbb{Z}$ and naturality of the cyclotomic trace map for each $k\ge 1$. 
In fact, it is also known that there are isomorphism 
\[K_i(X(k)_p;\mathbb{Z}_p)\cong TC_i(X(k)_p;\mathbb{Z}_p)\cong  TC_i(X(k);\mathbb{Z}_p)\]
by \cite[Thm 7.3.1.8 ]{DGM13} and \cite{Dun97}. Our computation is therefore a key step in the direction of computing the algebraic K-theory groups $K_i(X(k))$.

The program of Dundas--Rognes \cite[Sec. 4.5]{DR18} suggests that algebraic K-theory of the sphere spectrum can be computed from the algebraic K-theory of $X(k)$ by descent along the map of ring spectra $S\to X(k)$. In particular, the Amitsur complex associated to the map $S\to X(k)$ produces a cosimplicial resolution of $S$ by smash powers of $X(k)$. In \cite{DR18}, Dundas--Rognes prove that algebraic K-theory satisfies cosimplicial descent and therefore one can approach the computation of the algebraic K-theory groups of the sphere spectrum using the algebraic K-theory of the spectra $X(k)$. Our results therefore also provide a first step towards computing algebraic K-theory of the sphere spectrum. Just as the the computation of the stable homotopy groups of spheres is one of the most fundamental problems in algebraic topology, the algebraic K-theory of the sphere spectrum is one of the most fundamental questions in algebraic K-theory because of its applications to stable diffeomorphisms of manifolds; see for example Waldhausen \cite{Wal87} for a survey. 

Besides approximating algebraic K-theory of the sphere spectrum, computing invariants of the spectra $X(n)$ is useful in its own right because of the connection to formal groups. Just as $MU$ carries the universal formal group law, the spectra $X(n)$ carry the universal formal $n$-bud. Therefore, any spectrum $E$ equipped with a formal $n$-bud receives a map of homotopy commutative ring spectra from $X(n)$. If this map of homotopy commutative ring spectra can be rigidified to an actual $E_1$ ring map, all of the invariants $THH(E)$, $TP(E;\mathbb{Z}_p)$, $TC^{-}(E;\mathbb{Z}_p)$, 
$TC(E;\mathbb{Z}_p)$ and $K(E;\mathbb{Z}_p)$ we have discussed are modules over the corresponding invariant for $X(n)$. The analogous statements for $T(n)$ are true where formal $n$-buds are replaced with $p$-typical formal $n$-buds under the additional assumption that the canonical maps $T(n)\to BP$ are maps of $E_2$ ring spectra. This relationship to formal $n$-buds is also one of the reasons the spectra $X(k)$ and $T(n)$ played such a vital role in the proof of the Nilpotence theorem by Devinatz--Hopkins--Smith \cite{DHS88}.  

\subsection{Outline} 
In Section \ref{sec X(k)}, we recall the definitions of the spectra $X(k)$ and  $T(n)$ and their homology. In Section \ref{bok sec}, we compute homology of topological Hochschild homology of $X(k)$ and $T(n)$ using the B\"okstedt spectral sequence. In Section \ref{Greenlees section} we compute the continuous homology of the spectrum $THH(X(n))^{tC_p}$, where continuous homology is defined as in Equation \eqref{continuous homology}. In Section \ref{SectionSinger}, we relate $H_*(THH(X(n))^{tC_p})$ to the homological Singer construction $R_+(H^c_*(THH(X(n))))$ \cite{AGM85}. We then use this to prove our main theorem, Theorem \ref{main thm}, and subsequent corollaries, Corollary \ref{cor 1} and Corollary \ref{cor 2}. In Section 6, we discuss the same computations for $T(n)$ under the assumption that the canonical map $T(n)\to BP$ is a map of $E_2$ ring spectra. We also show that the obstruction to our assumption holding can be described in terms of an explicit class in an Atiyah-Hirzebruch spectral sequence.

\subsection{Conventions}
Throughout, homology is always taken with coefficients in $\mathbb{F}_p$ and $H_*(-,\mathbb{F}_p)$ will simply be denoted $H_*(-)$. We let $HH_*(R)$ denote the Hochschild homology over $\mathbb{F}_p$ of a graded $\mathbb{F}_p$ algebra $R$ and we write $\otimes$ for the tensor product over $\mathbb{F}_p$.  We use the terminology $p$-adic equivalence as shorthand for a map that induces an equivalence after $p$-adic completion.  We write $P(x_1,\dots x_k)$ for a polynomial algebra with generators $x_1,\dots, x_k$ over $\mathbb{F}_p$ and $E(y_1,\dots,y_k)$ for an exterior algebra with generators $y_1,\dots y_k$ over $\mathbb{F}_p$. We will use the convention of Milnor \cite{Mil58} and write 
$\ca_*= P(\zeta_1, \zeta_2, \dots ) $
for the dual Steenrod algebra at the prime $2$ where we define $\zeta_i^2:=\overline{\xi}_i$. We will write 
$\ca_*=P(\overline{\xi}_1, \overline{\xi}_2, \dots )\otimes E(\tau_0, \tau_1, \dots ) $
for the Steenrod algebra at primes $p\ge 3$. We will write 
$\mathbb{T} \subset \mathbb{C}$ for the circle group, regarded as the unit vectors in the complex numbers, and let $C_{n} \subseteq \t$ be subgroup of $n$-th roots of unity. We will use the notation $\dot{=}$ for an equality that holds only up to multiplication by a unit in $\mathbb{F}_p$. In fact, our results only rely on the homotopy category of spectra in most cases and all of our results could be proven in any of the standard models for the closed symmetric monoidal stable model category of spectra discussed in \cite{MMSS01}. 

\subsection{Acknowledgments} The authors would like to thank Mark Behrens and Andrew Salch for their comments on earlier versions of this paper and an anonymous referee helpful comments. The second author was partially supported by NSF grant DMS-1547292.

\section{The spectra $X(k)$ and $T(n)$}\label{sec X(k)}
The Ravenel spectra  $X(k)$ and $T(n)$ were first defined in \cite{Rav84}, where Ravenel proposed his family of well regarded conjectures known as the Ravenel conjectures. The spectra $X(k)$ and $T(n)$ were vital in the proofs of the Ravenel conjectures by Devinatz--Hopkins--Smith \cite{DHS88,HS98}. The spectrum $X(k)$ is constructed as the Thom spectrum $X(k):= Th(f)$ 
of the $2$-fold loop map  $f\co \Omega SU(k)\lra \Omega SU\simeq BU.$
By Lewis \cite[Ch. IX]{LMS86}, the Thom spectrum of an $m$-fold loop map is an $E_m$ ring spectrum and hence $X(k)$ is an $E_2$ ring spectrum. Functoriality of the Thom construction gives a sequence 
\[S = X(1) \to X(2) \to \cdots \to X(\infty) = MU\]
of maps of $E_2$ ring spectra. We refer to the maps $X(k)\to MU$ as the canonical maps.

Just as $MU$ splits $p$-locally as a wedge of suspensions of $BP$, $X(k)$ splits $p$-locally as a wedge of suspensions of $T(n)$ where $n$ is chosen such that $p^n \leq k < p^{n+1}$ \cite[Thm. 6.5.1]{Rav86}. By \cite[pg. 16]{Hop84}, the spectra $T(n)$ fit together to form a sequence whose homotopy colimit is $BP$. We refer to the map from $T(n)$ to the homotopy colimit $BP$ as the canonical map.
 By \cite{BM13}, we know that $BP$ is an $E_4$ ring spectrum and by \cite[Thm. 6.5.1]{Rav86} we know that $T(n)$ is a homotopy commutative homotopy associative ring spectrum for all $n \geq 0$. However, it is not known whether $T(n)$ is an $E_2$ ring spectrum or that the canonical map $T(n)\to BP$ is a map of $E_2$ ring spectra for any $n>0$. We discuss the multiplicative structure of $T(n)$ in more detail in Section \ref{T(n)}. 

The homology of $MU$ and $BP$ are well-known and the homology of $X(k)$ and $T(n)$ can be found in \cite[Sec. 3]{Rav84}. We recall these results in the following omnibus lemma. 

\begin{lem} Let $p$ be a fixed prime. 
\begin{enumerate}
\item[(a)]
The homology of $MU$ is determined by the isomorphism 
$H_*(MU) \cong P(b_1,b_2,b_3,\ldots)$
of $\mathcal{A}_*$ comodule algebras where $|b_i| = 2i$. 
The map $X(k)\to MU$ induces the isomorphism
\[
H_*(X(k)) \cong P(b_1,\ldots,b_k).
\]
of $\mathcal{A}_*$ comodule algebras onto its image in $H_*(MU)$. 
\item[(b)]
The map $BP\to H\mathbb{F}_p$ induces an isomorphism
$H_*(BP) \cong P(\overline{\xi}_1,\overline{\xi}_2,\overline{\xi}_3,\ldots)$
of $\mathcal{A}_*$-comodule algebras \cite{Rav84} onto its image in $\mathcal{A}_*$,
where $|\overline{\xi}_i| =2p^i-2$ for $i\ge 1$. The canonical map $T(n)\to BP$ induces an isomorphism
\[ H_*(T(n)) \cong 
P(\overline{\xi}_1,\overline{\xi}_2,\ldots,\overline{\xi}_n)
\]
of $\mathcal{A}_*$ comodule algebras onto the image of $H_*(T(n))$ in $H_*(BP)$. 
\end{enumerate}
\end{lem}

\section{Homology of the topological Hochschild homology of $X(n)$} \label{bok sec}
Our first step is to calculate the homology of $THH(X(n))$. For this, we use the B\"okstedt spectral sequence \cite{Bok86}. We quickly review the construction of this spectral sequence following \cite[Ch. IX]{EKMM07}. Let $R$ be an $S$-algebra. Then $THH(R)$ is the geometric realization of the cyclic bar construction on $R$, which is a simplicial ring spectrum $B_{\bullet}^{\text{cy}}(R)$ with $k$-simplices $R^{\wedge k+1}$ and the standard cyclic face and degeneracy maps. 
The B\"okstedt spectral sequence arises from applying homology to the skeletal filtration of the simplicial spectrum $B_{\bullet}^{\text{cy}}(R)$ and it is a strongly convergent spectral sequence, with signature
\begin{align}\label{bokstedt ss}
E^2_{*,*}(R) \cong HH_*(H_*(R))\Rightarrow H_*(THH(R)).
\end{align}

\begin{prop}\label{prop: homology of thh of xn} 
The homology of $THH(X(n))$ is 
\[
H_*(THH(X(n)) \cong P(b_1,b_2,\ldots,b_n) \otimes E(\sigma b_1, \sigma b_2, \ldots, \sigma b_n). 
\]
\end{prop}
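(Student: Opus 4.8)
The plan is to run the B\"okstedt spectral sequence
\[ E^2_{*,*}(X(n)) \cong HH_*(H_*(X(n))) \Rightarrow H_*(THH(X(n))) \]
and show it collapses at $E^2$ for degree reasons, with no multiplicative extensions. First I would record the input. Since $H_*(X(n)) \cong P(b_1,\ldots,b_n)$ is a polynomial algebra over $\mathbb{F}_p$ on even-degree generators $|b_i| = 2i$, its Hochschild homology is computed by the usual Koszul-type resolution: for a polynomial algebra $P(x)$ one has $HH_*(P(x)) \cong P(x) \otimes E(\sigma x)$ with $|\sigma x| = |x|+1$ (here $\sigma$ is the suspension/Connes operator raising homological filtration by one), and Hochschild homology takes tensor products of algebras to tensor products. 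Hence
\[ E^2_{*,*}(X(n)) \cong P(b_1,\ldots,b_n) \otimes E(\sigma b_1,\ldots,\sigma b_n), \]
with the $b_i$ in filtration $0$ and each $\sigma b_i$ in filtration $1$.

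Next I would argue the spectral sequence degenerates. The key observation is that $E^2$ is concentrated in even \emph{total} internal degree plus filtration parity that is controlled: every generator $b_i$ sits in bidegree $(0, 2i)$ and every $\sigma b_i$ in bidegree $(1, 2i)$, so a monomial in filtration $s$ has total degree congruent to $s \bmod 2$ is \emph{not} quite right---rather, the algebra is bigraded so that the only classes in odd filtration come from the exterior part. The cleaner route, following \cite[Ch. IX]{EKMM07} and the $MU$ case of \cite{LNR11}: the differential $d^r$ has bidegree $(-r, r-1)$, so $d^2$ lowers filtration by $2$ and raises internal degree by $1$; since the exterior generators $\sigma b_i$ are the image of the $b_i$ under the operator $\sigma$ arising from the $S^1$-action (the ``$\sigma$-operator'' or $d^1$ of the associated Bar/cyclic structure is already accounted for), and since $E^2$ is a free module over the subalgebra $P(b_1,\ldots,b_n)$ generated by the filtration-zero line, any differential is determined by its values on the $\sigma b_i$; but $d^r(\sigma b_i)$ would land in filtration $1-r < 0$ for $r \geq 2$, which is impossible, and $d^r(b_i) = 0$ since $b_i$ is a permanent cycle (it is in the image of $H_*(X(n)) = H_*(THH(X(n))^{(0)}) \to H_*(THH(X(n)))$ via the unit/inclusion of $0$-simplices). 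By multiplicativity of the B\"okstedt spectral sequence all differentials vanish, so $E^2 = E^\infty$.

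Finally I would resolve the extension problem. The associated graded of $H_*(THH(X(n)))$ is $P(b_1,\ldots,b_n)\otimes E(\sigma b_1,\ldots,\sigma b_n)$; the $b_i$ lift to the image of $H_*(X(n))$, and I need each $\sigma b_i$ to square to zero (no extension into the polynomial part) so that the exterior structure persists. This follows because $|\sigma b_i| = 2i+1$ is odd, and $H_*(THH(X(n)))$ is a graded-commutative $\mathbb{F}_p$-algebra, so at odd primes $(\sigma b_i)^2 = 0$ automatically; at $p = 2$ one must check $(\sigma b_i)^2$ is not hit by a polynomial class, which one sees by comparing with $THH(MU)$ via the map of $E_2$-ring spectra $X(n) \to MU$ (whose B\"okstedt spectral sequence is known, by \cite{LNR11}, to give $H_*(THH(MU)) \cong P(b_i) \otimes E(\sigma b_i)$ for all $i$) and noting the inclusion $H_*(THH(X(n))) \hookrightarrow H_*(THH(MU))$ is injective since it is so on $E^\infty$-pages. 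I expect the main obstacle to be precisely this last point --- pinning down that there are no multiplicative extensions at $p=2$ --- but the comparison map to $MU$, together with the fact that the B\"okstedt spectral sequence is natural and multiplicative and the $X(n) \to MU$ map is injective on homology, should close the gap.
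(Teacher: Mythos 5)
Your proposal is correct and follows essentially the same route as the paper: compute the $E^2$-page as $HH_*$ of a polynomial algebra via the Koszul resolution, observe the spectral sequence collapses because all algebra generators lie in filtration $\leq 1$ while $d^r$ drops filtration by $r \geq 2$, and resolve multiplicative extensions by the injective multiplicative comparison with the B\"okstedt spectral sequence for $MU$, whose extensions are settled in \cite[Lem.\ 6.2]{LNR11}. Your added remark that $(\sigma b_i)^2 = 0$ is automatic at odd primes is a harmless simplification, and the $MU$ comparison you invoke at $p=2$ is exactly the paper's argument (and in fact handles all potential extensions, not only the squares of the exterior generators).
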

\begin{proof} 
To compute the $E^2$-term of the B\"okstedt spectral sequence \eqref{bokstedt ss} for $X(k)$, we first need to compute the Hochschild homology $HH_*(H_*(X(k)))$. The Hochschild homology of grade polynomial algebras with generators in even degree can be computed using the Koszul resolution, which shows that
\[ HH_*(P(x_1,x_2, \ldots))\cong P(x_1,x_2, \ldots)\otimes E(\sigma x_1, \sigma x_2 ,\ldots), \]
see \cite[Prop. 2. 1]{MS93} for example. 
Therefore
\[E_{**}^2(X(k)) = P(b_1,b_2,\ldots, b_n) \otimes E(\sigma b_1,\sigma b_1,\ldots,\sigma b_k)\]
where  $|b_i|=(0,2i)$ and $|\sigma b_i|=(1, 2i)$. 

Since there are no generators in filtration degree greater than one and the $d_r$ differentials shift filtration degree by $r\ge 2$, there are no possible differentials and the spectral sequence collapses at the $E^2$-term. We may resolve multiplicative extensions in the same way as in the proof of \cite[Lem 6.2]{LNR11}. At $p=3$, we simply observe that the square of an odd degree generator must be trivial in a graded commutative $\mathbb{F}_p$ algebra. At $p=2$ since $THH(X(n))$ is an $E_1$ ring spectrum, we still have a Dyer--Lashofoperation $Q^{|\sigma x_\ell|/2}(\sigma x_\ell)=(\sigma x_\ell)^2$ in $H_*(THH(X(n)))$, see \cite[Ch. III Thm. 3.2]{BMMS86} and \cite[Thm. 5.2]{Law19}. In particular, we know 
\[ (\sigma x_\ell)^2=Q^{|\sigma x_\ell|/2}(\sigma x_\ell)=\sigma Q^{|\sigma x_\ell|/2}(x_{\ell})=0\] 
since $Q^{|\sigma x_\ell|/2}(x_{\ell})=0\in H_*(X(k))$ and Dyer--Lashofoperations commute with the operator $\sigma$ by \cite{Bok86}. This resolves the remaining possible multiplicative extensions. The abutment is a graded $\mathbb{F}_p$ module so additive extensions are not possible. 
\end{proof} 

\section{Homological Tate fixed points of $THH(X(n))$} \label{Greenlees section}
The next step is to compute the continuous homology of the Tate construction of $THH(X(k))$, where continuous homology is defined as \eqref{continuous homology}. Our discussion follows the discussion of the Tate construction in \cite[Proof of Prop. 4.9]{LNR11}, and we refer the reader to that proof for additional details.

Let $X$ be a $\mathbb{T}$-spectrum regarded as a $C_p$-spectrum by restricting the $\mathbb{T}$-action to the $p$-th roots of unity in $\mathbb{T}$. 
Then the \emph{Tate construction} of $X$, denoted $X^{tC_p}$, may be modeled by
\[X^{tC_p} := \left ( \widetilde{E}\t \wedge F(E\t_+,X)\right )^{C_p}\]
where $E\t$ is the total space of the universal $\t$ bundle over $B\t$ and $\widetilde{E}\mathbb{T}$ is the cofiber of the map $E\t_+ \to S^0$ induced by the collapse map $E\t\to *$ by applying the disjoint basepoint functor $(-)_+$ from spaces to based spaces. In particular, there are $C_p$-equivariant equivalences $E\t\simeq EC_p$ and $\widetilde{E}\t\simeq \widetilde{E}C_p$. We fix a model $\widetilde{E}\mathbb{T}$ as the one point compactification of $S^{\infty\mathbb{C}}$ of the countably infinite dimensional $\mathbb{C}$-vector space equipped with a $\mathbb{T}$-action by coordinate-wise multiplication. 
We also fix a $\mathbb{T}$-CW structure on $\widetilde{E}\mathbb{T}$ with  $(2n)$-skeleton and $(2n-1)$-skeleton $S^{n\mathbb{C}}$ so that the filtration quotients 
$S^{j\mathbb{C}}/S^{(j-1)\mathbb{C}}$ in the double speed skeletal filtration of $\widetilde{E}\mathbb{T}$ are 
homeomorphic to $\Sigma^{2j-1}\mathbb{T}_+$. 
We may also equip $\widetilde{E}\mathbb{T}$ with a $C_p$-action by restriction to the subgroup of $p$-th roots of unity and equip $\widetilde{E}\mathbb{T}$ with a $C_p$-CW structure with $S^{j\mathbb{C}}$ as $2j$-skeleton such that the double speed skeletal filtration quotient $\Sigma^{2j-1}\mathbb{T}_+$ is equipped with a $C_p$ CW structure as in \cite[Def. 3.8]{LNR11}. We let $\widetilde{E}_j$ for $j\ge 0$ be the $j$-skeleton of this skeletal filtration of $\widetilde{E}\t$ as a $C_p$-CW complex. Following Greenlees \cite{Gre87}, we define $\widetilde{E}_j$ for $j<0$ to be the Spanier-Whitehead dual $D(\widetilde{E}_{-j-1})$. We can then define 
\[ X^{tC_p}[j]=\left ( \widetilde{E}\mathbb{T}/\widetilde{E}_j \wedge F(E\t_+,X) \right )^{C_p} \]
to obtain a filtration
\begin{align}\label{Greenlees filtration}
X^{tC_p} \to \cdots X^{tC_p}[j] \to X^{tC_p}[j+1] \to X^{tC_p}[j+2] \to \cdots
\end{align}
of $X^{tC_p}$, which we  refer to as the \emph{Greenlees filtration}.\footnotemark \footnotetext{This is not exactly the same as the filtration as defined by Greenlees in \cite{Gre87}, but the difference between the two just amounts to a different choice of model for $EC_p$ as a $C_p$-CW complex.}
The filtration quotients are 
\[\left ( \widetilde{E}_j/\widetilde{E}_{j-1}\wedge F(E\t_+,X) \right )^{C_p}\simeq \Sigma^jX \] 
for each integer $j$, as shown in \cite[Proof of Prop. 4.9]{LNR11}. 

Applying homology produces an exact couple whose associated spectral sequence is called the \emph{homological Tate spectral sequence}, which converges strongly to the \emph{continuous homology}
\begin{align}\label{continuous homology} 
H_*^c(X^{tC_p})=\lim_j H_*(X^{tC_p}[j])
\end{align}
when $X$ is bounded below and $H_*(X)$ is finite type by \cite[Prop. 4.1]{LNR11}.

We begin by analyzing this spectral sequence for $THH(X(n))$, which is clearly bounded below and whose homology is clearly finite type by Lemma \ref{prop: homology of thh of xn}.  The homological Tate spectral sequence is therefore a strongly convergent spectral sequence, with signature
\begin{align}\label{eq2} \widehat{E}_2^{s,t} = \widehat{H}^{-s}(C_p; H_t(THH(X(k)))\Rightarrow H^c_{s+t}(THH(X(k))^{tC_p}) 
\end{align} 
where $H^c_{s+t}(THH(X(k))^{tC_p})$ is a continuous $\mathcal{A}_*$-comodule by \cite[Prop. 4.1]{LNR12}.\footnotemark \footnotetext{See \cite[Sec. 2.2]{LNR12} for a survey of continuous $\mathcal{A}_*$ comodules.}

The geometric realization of the cyclic bar construction admits a canonical $\mathbb{T}$-action, so $THH(X(k))$ is equipped with a canonical $\mathbb{T}$-action. The $C_p$-action on $H_*(THH(X(k)))$ is the restriction of this $\mathbb{T}$-action, so it acts trivially on $H_*(THH(X(k)))$ because $\t$ is path connected and the action of $\t$ on $H_*(THH(X(k)))$ is continuous. Therefore, the $E_2$-term of \eqref{eq2} splits as a tensor product
\[\widehat{E}_2^{**} = \widehat{H}^{-*}(C_p;H_*(THH(X(k)))) \cong \widehat{H}^{-*}(C_p;\mathbb{F}_p) \otimes H_*(THH(X(k)).\] 
Since $\widehat{H}^{-*}(C_2;\mathbb{F}_2) \cong P(t^{\pm 1})$ with $|t|=-1$ and $\widehat{H}^{-*}(C_p;\mathbb{F}_p) \cong E(h)\otimes P(t^{\pm 1})$ with $|h|=-1$ and $|t|=-2$ if $p>2$, we can identify the $E^2$-term of \eqref{eq2}: 
\[ \widehat{E}_2^{**}\cong
\begin{cases}
P(t^{\pm 1}) \otimes P(b_1,b_2,\ldots,b_n) \otimes E(\sigma b_1,\sigma b_2,\ldots,\sigma b_k), \quad &p=2, \\
E(h) \otimes P(t^{\pm1}) \otimes P(b_1,b_2,\ldots,b_k)\otimes E(\sigma b_1,\sigma b_2,\ldots, \sigma b_n), \quad &p>2.
\end{cases}
\] 
For $p=2$, the degrees of the generators are $|t| = (-1,0)$, $|b_i| = (0,2i)$, and $|\sigma b_i| = (0,2i+1)$. For $p>2$, the degrees of the generators are $|h| = (-1,0)$, $|t| = (-2,0)$, and the degrees of $b_i$ and $\sigma b_i$ are the same as in the case $p=2$. 

We note that the operator $\sigma(-)$ is induced in homology by the natural map 
\[ S^1\wedge R \to S^1_+\wedge R\to S^1_+\wedge THH(R)\to THH(R)\]
where the final map in the composite is exactly the action of $S^1_+$.  Thus, for each integer $k\ge 1$, the induced map 
\[ H_*(S^1)\otimes H_*(X(k))\cong H_*(S^1\wedge X(k))\to H_*(THH(X(k)))\]
sends $\iota \otimes b_i$ to $\sigma b_i$ for all $1\le i\le k$, where $\iota$ is the fundamental class in $H_1(S^1)$ \cite[Prop. 3. 2]{MS93}. This fact will be used in the following result. 

\begin{prop}\label{prop Einfty page} 
In the homological Tate spectral sequence \eqref{eq2}, the $d^2$-differentials are generated by 
\[
d_2(b_i) \thinspace \dot{=} 
\begin{cases}
\thinspace t^2 \sigma b_i & \text{ for } p=2, \text{ and }\\ 
   \thinspace t\sigma b_i  & \text{ for } p>2,
  \end{cases}
\]
via the Leibniz rule for all $1\le i\le k$. 
\end{prop} 
\begin{proof} 
We compare the spectral sequence \eqref{eq2} for $k$ finite with the the case $k=\infty$ where $X(\infty)=MU$ by examining the map of spectral sequences induced by the canonical map $X(k)\to MU$. The $d^2$-differentials for the Tate spectral sequence converging to $H^c_*(THH(MU)^{tC_p})$ were computed in \cite[Prop. 6.3]{LNR11}. They showed that for all $i \geq 1$, one has $d^2(b_i) = t^2 \sigma b_i$ for $p=2$, and $d^2(b_i) \dot{=} t \sigma b_i$ for $p>2$. This can be proven directly by lifting the differentials from the Tate spectral sequence converging to the continuous homology $H^c_*(THH(X(k))^{t\t})$ of the Tate construction along the inclusion $C_p \to \t$, where continuous homology is defined as in \cite[Prop. 7.1]{BR05}. The $\t$-Tate spectral sequence differentials arise from looking at the skeletal filtration of the model of $E\t$ given by $S(\infty \mathbb{C})$ and noting that the attaching maps are given by the $\t$-action \cite{BR05}. 

The map $X(k) \to MU$ induces an injective map of $E^2$-terms of homological Tate spectral sequences. In particular, any differential $d^2(b_i) = t^2 \sigma b_i$ for $p=2$ or $d^2(b_i) \dot{=} t \sigma b_i$ for $p>2$ in the $MU$ case must also occur in the $X(k)$ case when $i \leq k$ and when $i>k$ both the source and target of the differential in the target spectral sequence are not in the image of the map of $E^2$-terms. This gives the stated $d^2$-differentials. Therefore, the map of $E^3$-terms of homological Tate spectral sequences is again injective. Since the homological Tate spectral sequence for $MU$ collapses at the $E^3$-term by \cite[Prop. 6.3]{LNR11} and the map of $E^3$-terms is injective the homological Tate spectral sequence also collapses for $X(k)$ for any integer $k\ge 1$. 
\end{proof}

The continuous homology $H^c_*(THH(X(k))^{tC_p})$ of the Tate construction of $THH(X(k))$ follows from the above pattern of differentials. 
\begin{cor}\label{cohoxa} Let $k\ge 1$ be an integer. 
The continuous homology of the Tate construction of $THH(X(k))$ is determined by an isomorphism of continuous $\mathcal{A}_*$-comoudle algebras
\[H^c_*(THH(X(k))^{tC_p}) \cong 
\begin{cases}
P(t^{\pm1}) \otimes P(b^2_1,\ldots,b^2_k) \otimes E(b_1 \sigma b_1,\ldots,b_k\sigma b_k),  \quad & p=2, \\
E(h) \otimes P(t^{\pm 1}) \otimes P(b^p_1,\ldots,b^p_k) \otimes E(b^{p-1}_1\sigma b_1,\ldots,b^{p-1}_k \sigma b_k), \quad &p>2,
\end{cases}
\]
where the continuous $\mathcal{A}_*$-coaction is determined by the inclusion into $H^c_*(THH(MU)^{tC_p})$.
\end{cor}
\begin{proof}
The $E_{\infty}$-term in the spectral sequence is determined by Proposition \ref{prop Einfty page}. To solve extensions, we note that the map of spectral sequences induced by $X(k) \to MU$ is multiplicative by functoriality of the homological Tate spectral sequence since the map $X(k)\to MU$ is a map of $E_2$ ring spectra. Therefore, the continuous $\mathcal{A}_*$-comodule extensions and multiplicative extensions follow from \cite[Props. 6.3, 6.4]{LNR11} and naturality of the Tate-valued Frobenius map. 
\end{proof}

\section{Identification with the Singer construction}\label{SectionSinger}
The goal of this section is to prove the Segal Conjecture for $THH(X(n))$, Theorem \ref{main thm}. Our proof proceeds by modifying the proof of the Segal Conjecture for $THH(MU)$ given by Lun{\o}e-Nielsen--Rognes in \cite{LNR11}. To avoid repeating some of their technical arguments and constructions, we include precise references to their paper where possible. Note that we cannot directly apply the proofs of in Lun{\o}e-Nielsen--Rognes in \cite{LNR11} because they use results from \cite{BR05} in the case of $MU$ case that rely on having an $E_{\infty}$-ring spectrum structure on $MU$. No such $E_{\infty}$-ring spectrum structure exists on $X(n)$, which can be seen using Dyer--Lashoff operations $Q_1(\overline{\xi}_i) =\overline{\xi}_{i+1}$ computed by Steinberger \cite{BMMS86}. 

By the isotropy separation diagram for topological Hochschild homology and the cyclotomic structure on $THH(R)$ (see \cite[Prop. 4.1]{HM97}), the map $THH(R)^{C_p} \overset{\Gamma}{\lra} THH(R)^{hC_p}$ is a $p$-adic equivalence whenever the Tate-valued Frobenius map
\[ \varphi_p \colon \thinspace THH(R) \lra THH(R)^{tC_p} \]
is an $p$-adic equivalence. We will show this by exhibiting an $Ext$-equivalence (Definition \ref{Ext-equiv})
\begin{align}\label{eq3} H_*(THH(R)) \overset{\epsilon_*}{\lra} R_+(H_*(THH(R))) \overset{\Phi_n}{\lra} H^c_*(THH(R)^{tC_p}), \end{align}
where $R_+(-)$ is the homological Singer construction \cite[Def. 3.7]{LNR12}. 
The $p$-adic equivalence then follows from comparing Adams spectral sequence to the inverse limit Adams spectral sequence as in \cite{LDMA80,AGM85,LNR12}. The map $\epsilon_*$ is an $Ext$-equivalence by \cite[Prop. 1.2, Thm. 1.3]{AGM85}, so we must show that the map $\Phi_n$ is an $Ext$-equivalence. 

\begin{defin}\label{Ext-equiv}
A homomorphism $M\to N$ of $\ca_*$-comodules is an \emph{$Ext$-equivalence} if the induced homomorphism
\[Ext^{s,t}_{\ca_*}(\mathbb{F}_p,M) \to Ext^{s,t}_{\ca_*}(\mathbb{F}_p,N)\]
is an isomorphism for all $s \geq 0$ and $t\in \mathbb{Z}$.
\end{defin}

We begin with the case $R = X(k)$ and $p>2$. By Corollary \ref{cohoxa}, the continuous homology of the Tate construction on $THH(X(k))$ is determined by the isomorphism
\[ H_*^c(THH(X(k))^{tC_p})\cong E(h) \otimes P(t^{\pm 1}) \otimes P(b_1^p, b_2^p,\ldots, b_k^p)\otimes E(b_1^{p-1}\sigma b_1 ,b_2^{p-1}\sigma b_2\ldots,b_k^{p-1}\sigma b_n). \]
On the other hand, one can consider the homological Tate spectral sequence, with signature
\[ \widehat{E}^{**}_2 = \widehat{H}^{-*} (C_p; H_*(THH(X(k))^{\wedge p})) \Rightarrow  H_*(((THH(X(k))^{\wedge p})^{tC_p}) \cong R_+(H_*(THH(X(k)))) \]
where the isomorphism on the right-hand side follows from \cite[Thm. 5.9]{LNR12}. In this case, the homological Singer construction can be expressed as
\[ R_+(H_*(THH(X(k)))) = E(h) \otimes P(t^{\pm 1}) \otimes P(b_1^{\otimes p},b_2^{\otimes p},\ldots,b_k^{\otimes p}) \otimes E(\sigma b_1^{\otimes p},\sigma b_2^{\otimes p},\ldots, \sigma b_k^{\otimes p}) \]
which is in bijection with $H^c_*(THH(X(k))^{tC_p})$ via $b_i^p \mapsto b_i^{\otimes p}$ and $b_i^{p-1} \sigma b_i \mapsto t^m \otimes \sigma b_i^{\otimes p}$ where $m = (p-1)/2$. The goal is to promote this filtration-shifting bijection to an isomorphism of complete $\ca_*$-comodules. 

The homology of $X(k)$ and $THH(X(k))$ are sub-$\ca_*$-comodules of the homology of $MU$ and $THH(MU)$, respectively. Consequently, $H^c_*(THH(X(k))^{tC_p})$ is a complete sub-$\ca_*$-comodule of $H^c_*(THH(MU)^{tC_p})$. Therefore, the formulas and computations leading up to \cite[Props. 7.2, 7.3]{LNR11} carry over mutatis mutandis.  
In particular, we obtain maps 
\begin{align}
\label{f map} R_+(H_*(X(k))) \otimes_{H_*(X(k))} H_*(THH(X(k))) \overset{f}{\lra} R_+(H_*(THH(X(k))) \\
\label{g map} 	R_+(H_*(X(k))) \otimes_{H_*(X(k))} H_*(THH(X(k))) \overset{g}{\lra} H^c_*(THH(X(k))^{tC_p}) 
\end{align}
defined by $f(\alpha \otimes \beta) = R_+(\eta_*)(\alpha) \cdot \epsilon_*(\beta)$  and $g(\alpha \otimes \beta) = \eta^t_*(\alpha) \cdot \hat{\Gamma}_*(\beta)$, where $R_+(\eta_*)$, $\epsilon_*$, $\eta^t_*$ and $\hat{\Gamma}_*$ are the $H_*(X(k))$-linear maps 
\begin{align*}
R_+(\eta_*)\co R_+(H_*(X(k)))\rightarrow R_+(H_*(THH(X(k))) \\
\epsilon_*\co H_*(THH(X(k))) \rightarrow R_+(H_*(THH(X(k)) \\
\eta^t_*\co R_+(H_*(X(k))\rightarrow H_*^c(THH(X(k))^{tC_p}) \\
\hat{\Gamma}_*\co H_*(THH(X(k)))\rightarrow H_*^c(THH(X(k))^{tC_p}) 
\end{align*}
induced by the usual unit map 
\[ \eta\co X(k)\to THH(X(k)),\] 
Tate diagonal 
\[\epsilon\co THH(X(k))\rightarrow (THH(X(k))^{\wedge p})^{tC_p},\] 
and the Tate-valued Frobenius map 
\[ \varphi_p \co THH(X(k))\rightarrow THH(X(k))^{tC_p}.\] 

There are filtrations  of the above $\ca_*$-comodules defined by
\[F^kH_*(X^{tC_p}) = im\left (H_*^c(X^{tC_p}) \to H_*(X^{tC_p}[j])\right )\]
where $H_*(X^{tC_p}[j])$ is the homology of the $j$-th term in the Greenlees filtration \eqref{Greenlees filtration}.
In particular, this defines a filtration on $R_+(H_*(R))$ and $R_+(H_*THH(R))$ for an $S$-algebra $R$ which is bounded below and finite type because, by \cite[Thm. 5.9]{LNR12}, there are isomorphisms $R_+(H_*R)\cong H_*^c ( \left ( R^{\wedge p} \right )^{tC_p})$ and 
$ R_+(H_*THH(R))\cong H_*^c ( \left ( THH(R)^{\wedge p} \right )^{tC_p}).$\footnote{In fact, this result has recently been extended by Nikolaus--Scholze \cite[Theorem III.1.7]{NS18} who show that the map $X\to (X^{\wedge p})^{tC_p}$ exhibits $(X^{\wedge p})^{tC_p}$ as the $p$-completion of $X$ for all bounded below spectra without the finite type hypothesis. } 
The maps $f$ and $g$ defined in \eqref{f map} and \eqref{g map} induce maps $\{f_j\}$ and $\{g_j\}$ of cofiltered $\ca_*$-comodules.

\begin{prop} The maps $\{f_j\}$ and $\{g_j\}$ of cofiltered $\ca_*$-comodules are strict maps of cofiltered $\ca_*$-comodules which assemble into pro-isomorphisms whose limits $\hat{f}$ and $\hat{g}$ are isomorphisms of complete $\ca_*$-comodules. 
\end{prop}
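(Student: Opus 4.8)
The plan is to reduce the statement to a comparison of associated graded objects and then appeal to completeness of the Greenlees filtration. Since $X(n)$ is connective and of finite type, so are $THH(X(n))$ and $THH(X(n))^{\wedge p}$; hence by \cite[Thm. 5.9]{LNR12} (and \cite[Thm. III.1.7]{NS17}) each of the three $\ca_*$-comodules appearing in the statement is the limit of its Greenlees filtration, that filtration is Hausdorff, the homology of each filtration stage is of finite type, and the associated towers are Mittag-Leffler with vanishing $\lim^1$. Moreover, by construction the associated graded of each of these filtered modules is the $E^{\infty}$-page of a homological Tate spectral sequence: for $H^c_*(THH(X(n))^{tC_p})$ this is the spectral sequence of Section \ref{Greenlees section}, whose $E^{\infty}$-page was recorded in Corollary \ref{cohoxa}; for $R_+(H_*THH(X(n)))\cong H^c_*((THH(X(n))^{\wedge p})^{tC_p})$ it is the Tate spectral sequence of the cyclic-permutation action on $THH(X(n))^{\wedge p}$, whose $E^{\infty}$-page is the displayed formula for the homological Singer construction; and the associated graded of the source $R_+(H_*X(n))\otimes_{H_*X(n)}H_*THH(X(n))$ is $\operatorname{gr}(R_+(H_*X(n)))\otimes_{H_*X(n)}H_*THH(X(n))$, since $H_*THH(X(n))$ is free over $H_*X(n)$ and so tensoring with it is exact. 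Because the $\ca_*$-comodule structures on all of the objects built from $X(n)$ are restricted from the corresponding objects built from $MU$, and because $f_k$ and $g_k$ are $\ca_*$-comodule homomorphisms for each $k$, it suffices to show that $f$ and $g$ induce isomorphisms of associated graded modules.

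To verify this, I would use the multiplicativity of $f$ and $g$ to reduce to computing the images of the algebra generators of the source in terms of those of the two targets. The generators $t^{\pm1}$, and $h$ when $p>2$, are detected correctly because $\eta^t_*$, $\hat{\Gamma}_*$ and $R_+(\eta_*)$ are induced by maps of $C_p$-spectra and hence act as the identity on the $\hat{H}^*(C_p;\mathbb{F}_p)$-factor of the $E^{\infty}$-page. The remaining generators, built from the classes $b_i$ and $\sigma b_i$, are handled by the explicit descriptions of the images of $b_i$ and $\sigma b_i$ under the Tate diagonal $\epsilon_*$ and under $\hat{\Gamma}_*$, matched up via the filtration-shifting bijection displayed before the proposition; these descriptions are exactly the content of the computations preceding \cite[Prop. 7.2, 7.3]{LNR11}. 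Since $H_*(THH(X(n)))$ is a sub-$\ca_*$-comodule algebra of $H_*(THH(MU))$, while $H^c_*(THH(X(n))^{tC_p})$ and $R_+(H_*THH(X(n)))$ are complete sub-comodules of their $MU$-counterparts, each of these identities is obtained by restricting the corresponding identity of \cite{LNR11}; the same argument covers the prime $2$, where the relevant formulas are again those of \cite{LNR11} together with Corollary \ref{cohoxa}. It follows that $\operatorname{gr}(f)$ and $\operatorname{gr}(g)$ are isomorphisms.

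Finally, a strict map of complete, Hausdorff filtered $\ca_*$-comodules whose filtration stages have Mittag-Leffler towers and which is an isomorphism on associated graded is a pro-isomorphism of the underlying inverse systems, and passing to limits yields isomorphisms $\hat{f}$ and $\hat{g}$ of the continuous-homology modules; the limits are again $\ca_*$-comodules since each finite stage is and the coaction maps commute with the (strict) structure maps of the inverse systems, and the inverse of a comodule isomorphism is automatically a comodule map. Thus $\hat{f}$ and $\hat{g}$ are isomorphisms of complete $\ca_*$-comodules. I expect the main obstacle to be the second step, namely making the phrase ``mutatis mutandis'' precise: one must check that the filtration-shift bookkeeping and the multiplicative extensions of \cite[Sec. 7]{LNR11} are all compatible with restriction along $H_*(X(n))\to H_*(MU)$, and in particular that the correction terms appearing in the Tate-diagonal formula for $\sigma b_i$ already lie in the sub-comodule coming from $X(n)$ rather than only in the larger $MU$-module.
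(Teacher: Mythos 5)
Your strategy diverges from the paper's in a way that conceals a genuine gap. The paper (following \cite[Prop.~7.2]{LNR11}) does not argue via associated graded objects: in each total degree $d$ it constructs an explicit pro-inverse $\phi_{k,d}$ defined only on filtration $N=N(k,d)=p(k-d)+d$ of the target, using a direct sum decomposition of $[R_+(H_*(X(n)))\otimes E(\epsilon_*(\sigma b_1),\ldots,\epsilon_*(\sigma b_n))]_d$ over strictly increasing sequences $L$ and the filtration shifts $s_L=-(p-1)(2\ell_1+\cdots+2\ell_r+r)$. The re-indexing $N(k,d)-k=(p-1)(k-d)$ is unbounded as $k\to\infty$, and this is precisely the signal that the stage-wise maps $f_k$, $g_k$ are \emph{not} isomorphisms, hence that $\operatorname{gr}(f)$ and $\operatorname{gr}(g)$ are \emph{not} isomorphisms of associated gradeds. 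Indeed, the comparison being promoted is explicitly a \emph{filtration-shifting} bijection ($b_i^{p-1}\sigma b_i\leftrightarrow t^m\otimes\sigma b_i^{\otimes p}$, with the two sides in different Greenlees filtrations), so $\Phi=\hat g\circ\hat f^{-1}$ cannot preserve filtration, and $f$ and $g$ cannot both be strict maps inducing isomorphisms on associated graded. Your central reduction --- ``it suffices to show that $f$ and $g$ induce isomorphisms of associated graded modules'' --- therefore asks for something false; were it true, you would obtain the far stronger conclusion that $f$ and $g$ are isomorphisms of filtered objects, and the pro-category would be unnecessary.

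What is actually needed is the weaker assertion that $\{f_k\}$ and $\{g_k\}$ are pro-isomorphisms, and establishing it requires the quantitative bookkeeping you defer to the end: the correction terms in $\epsilon_*(\sigma b_\ell)$ (and their analogues for $g$) are only invertible after descending a controlled, $k$-dependent number of stages of the Greenlees tower, which is exactly what the maps $\phi_{k,d}$ and the filtration-shift estimates of \cite[Pg.~618--619]{LNR11} encode. Your final paragraph correctly locates the difficulty in making ``mutatis mutandis'' precise, but the first two paragraphs replace that work with an associated-graded criterion that does not apply to these maps. A correct argument along the paper's lines restricts the Lun{\o}e-Nielsen--Rognes decomposition to the finitely many sequences $L=(\ell_1<\cdots<\ell_r)$ with $0\le r\le n$, defines $\phi_{k,d}$ on each summand by $\epsilon_L\mapsto\sigma b_{\ell_1}\cdots\sigma b_{\ell_r}$ with the shift $s_L$, and checks that the two composites with $f_{N,d}$ and $f_{k,d}$ are the structural surjections.
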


\begin{proof} Our proof is modified from the proof of \cite[Prop. 7.2]{LNR11}. We will only provide the proof for $\{f_j\}$ since the proof for $\{g_j\}$ is similar. In each total degree $d$, $f_j$ defines a map
\[f_{j,d}\co [F^jR_+(H_*(X(k)))\otimes E(\sigma b_1,\sigma b_2,\ldots,\sigma b_n)]_d \to F^jR_+(H_*(THH(X(k)))_d.\]
For each $k$, we would like to define compatible maps
\[\phi_{j,d}\co [F^NR_+(H_*(THH(X(k))))]_d \to [F^jR_+(H_*(X(k))) \otimes E(\sigma b_1,\sigma b_2,\ldots,\sigma b_k)]_d\]
with $N = N(j,d) = p(j-d)+d$, such that the composition $\phi_{j,d} \circ f_{N,d}$ is equal to the structural surjection 
\[ \xymatrix{ [F^NR_+(H_*(X(k)))\otimes E(\sigma b_1,\sigma b_2\ldots,\sigma b_k)]_d \ar@{->>}[r] &  [F^jR_+(H_*(X(k)))\otimes E(\sigma b_1,\sigma b_2 \ldots,\sigma b_k)]_d  } \]
and such that the composition $f_{j,d} \circ \phi_{j,d,}$ is equal to the structural surjection 
\[ \xymatrix{ [F^NR_+(H_*(THH(X(k)))]_d \ar@{->>}[r] & [F^jR_+(H_*(THH(X(k))))]_d. }\] 
We can then conclude that the collection $\{f_{j,d}\}_j$ forms a pro-isomorphism with pro-inverse $\{\phi_{j,d}\}_j$ in each total degree $d$. 
These maps therefore assemble into a pro-isomorphism $\{f_j\}$ with pro-inverse $\{\phi_j\}$. 

In \cite[Proof of Thm. 7.2]{LNR11}, Lun{\o}e-Nielsen--Rognes decompose the group
\[[R_+(H_*(MU)) \otimes E(\epsilon_*(\sigma m_\ell)| \ell \geq 1)]_d\] 
into a direct sum indexed by strictly increasing sequences 
$L = (\ell_1 < \cdots < \ell_r)$ of natural numbers of length $r \geq 0$. 
Then they define the maps $\phi_{j,d}$ for $MU$ using this decomposition on \cite[Pg. 618-619]{LNR11}. 
The desired maps $\phi_{j,d}$ for $X(k)$ follow from exactly the same steps. 
To decompose $[R_+(H_*(X(k))) \otimes E(\epsilon_*(\sigma b_1), \ldots, \epsilon_*(\sigma b_k)))]_d$ into a direct sum, 
we restrict to strictly increasing sequences $L = (\ell_1 < \cdots < \ell_r)$ where $0 \leq r \leq n$. 
Using the notation $\epsilon_L =\epsilon_*(\sigma b_{\ell_1})\cdot \ldots \cdot \epsilon_*(\sigma b_{\ell_r})$, we obtain homomorphisms 
\[[F^{N-s_L}R_+(H_*(X(k))) \otimes \mathbb{F}_p\{ \epsilon_L\}]_d \to [F^j R_+(H_*(X(k)))\otimes E(\sigma b_1,\sigma b_2,\ldots,\sigma b_k)]_d\]
defined by  $\epsilon_L \mapsto \sigma b_{\ell_1} \ldots \sigma b_{\ell_r}$ where $s_L=-(p-1)(2{\ell_1}+\dots +2{\ell_r}+r)$. 
The definition of $\phi_{j,d}$ is then completed by taking the direct sum over $L$.
The filtration shift estimates from \cite[Pg. 618-619]{LNR11} carry over to the $X(k)$ case. Therefore the maps $f_{j,d}$ and $\phi_{j,d}$ compose into the desired structural surjections in each total degree $d$. 
This implies that the set $\{f_j\}$ is a strict map of inverse systems which assembles into a pro-isomorphism. 

The analogous pro-isomorphisms can be defined for $X(k)$ when $p=2$ by essentially the same argument. 
Setting $\Phi_{X} := \hat{g} \circ \hat{f}^{-1}$ for the corresponding $\hat{g}=\text{lim}g_j$ and $\hat{f}=\text{lim}f_j$ yields the desired isomorphism.
\end{proof}
\begin{cor}\label{caci} There is an isomorphism of complete $\ca_*$-comodules
\[ \Phi_{X(k)} \co R_+(H_*(THH(X(k)))) \lra H^c_*(THH(X(k))^{tC_p}) .\]
\end{cor}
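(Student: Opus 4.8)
The plan is to obtain $\Phi_{X(n)}$ formally from the preceding proposition, where all of the real work has already been done. At an odd prime $p$, that proposition supplies two isomorphisms of complete $\ca_*$-comodules
\[ \hat{f}\co R_+(H_*(X(n)))\otimes_{H_*(X(n))}H_*(THH(X(n))) \lra R_+(H_*(THH(X(n)))) \]
\[ \hat{g}\co R_+(H_*(X(n)))\otimes_{H_*(X(n))}H_*(THH(X(n))) \lra H^c_*(THH(X(n))^{tC_p}) \]
arising as the limits $\hat{f}=\lim_k f_k$ and $\hat{g}=\lim_k g_k$ of the pro-isomorphisms $\{f_k\}$ and $\{g_k\}$. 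The point I would emphasize is that $f$ and $g$ are defined on the \emph{same} module, namely the relative tensor product $R_+(H_*(X(n)))\otimes_{H_*(X(n))}H_*(THH(X(n)))$, via $f(\alpha\otimes\beta)=R_+(\eta_*)(\alpha)\cdot\epsilon_*(\beta)$ and $g(\alpha\otimes\beta)=\eta^t_*(\alpha)\cdot\hat{\Gamma}_*(\beta)$. One then simply sets $\Phi_{X(n)}:=\hat{g}\circ\hat{f}^{-1}$, which has the asserted source and target.

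Next I would verify that $\Phi_{X(n)}$ is an isomorphism of complete $\ca_*$-comodules. It is an isomorphism of underlying graded $\mathbb{F}_p$-modules as a composite of two such. It is $\ca_*$-colinear because $\hat{g}$ is, together with the standard fact that the inverse of an $\ca_*$-comodule isomorphism is again colinear (apply $\hat f^{-1}$ to the coaction identity for $\hat f$). Finally it respects the image filtrations $F^kH_*(X^{tC_p})=\mathrm{im}\bigl(H^c_*(X^{tC_p})\to H_*(X^{tC_p}[k])\bigr)$ coming from the Greenlees filtration: the relevant modules are complete with respect to them by \cite[Thm. 5.9]{LNR12}, and $\hat f$, $\hat g$ are filtered with filtered inverses since they are limits of strict maps of inverse systems that are pro-isomorphisms. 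I would also record the effect of $\Phi_{X(n)}$ on generators: unwinding the $L$-indexed decompositions used to build $\hat f$ and $\hat g$ reproduces the filtration-shifting bijection already displayed before the proposition, $b_i^{\otimes p}\mapsto b_i^p$ and $t^m\otimes\sigma b_i^{\otimes p}\mapsto b_i^{p-1}\sigma b_i$ with $m=(p-1)/2$, which pins $\Phi_{X(n)}$ down concretely.

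For $p=2$ I would rerun exactly the same argument, substituting the $p=2$ forms of $R_+(-)$ and of $H^c_*(THH(X(n))^{tC_2})$ from Corollary \ref{cohoxa} and the analogous $p=2$ pro-isomorphisms; only the bookkeeping changes, since $\hat{H}^*(C_2;\mathbb{F}_2)=P(t^{\pm 1})$ carries no exterior class $h$ and $|t|=-1$ rather than $-2$.

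The one place requiring genuine care is upstream, in the preceding proposition: establishing that $\{f_k\}$ and $\{g_k\}$ really are pro-isomorphisms of inverse systems with a common source. Granting that, $\hat{f}^{-1}$ exists as a morphism of complete $\ca_*$-comodules, the composite $\hat{g}\circ\hat{f}^{-1}$ is legitimate, and the corollary is essentially formal. I do not expect any obstacle internal to the corollary itself.
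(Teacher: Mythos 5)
Your proposal matches the paper's proof: the corollary is obtained exactly by setting $\Phi_{X(n)} = \hat{g}\circ\hat{f}^{-1}$ for the limits of the pro-isomorphisms $\{f_k\}$ and $\{g_k\}$ from the preceding proposition, with the $p=2$ case handled by the same argument. The extra verifications you record (colinearity of the inverse, compatibility with the Greenlees filtrations, and the effect on generators) are consistent with, and slightly more explicit than, what the paper writes.
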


Therefore, when $R=X(k)$, the composite map \eqref{eq3} is an $Ext$-equivalence for each $k\ge 1$. We therefore have proven the main theorem. 

\begin{thm}\label{main thm}
The Tate-valued Frobenius map
\[\varphi_p \colon \thinspace THH(X(k)) \lra THH(X(n))^{tC_p}\]
is a $p$-adic equivalence for all integers $k\ge 1$ and primes $p$.
\end{thm}

\begin{proof}
The Tate-valued Frobenius map
\[ \varphi_p \co THH(X(k)) \to THH(X(k))^{tC_p}\]
induces a map from the Adams spectral sequence, with signature
\begin{align}\label{ASS1} E^{s,t}_2 = Ext^{s,t}_{\ca_*}(\mathbb{F}_p,H_*(THH(X(k)))) \Rightarrow \pi_{t-s}(THH(X(k)))
\end{align}
to the strongly convergent inverse limit Adams spectral sequence, with signature 
\begin{align}\label{ASS2} E^{s,t}_2 = Ext^{s,t}_{\ca_*}(\mathbb{F}_p,H^c_*(THH(X(n))^{tC_p})) \Rightarrow \pi_{t-s}(THH(X(n))^{tC_p}\end{align}
	of \cite[Prop. 2.2]{LNR12}. 

By \cite[Thm. 5.9]{LNR12}, there is an isomorphism 
\begin{align} \label{Singer const identification}
	H_* \left (THH(X(n))^{\wedge p}\right )^{tC_p}\cong R_+(H_*THH(X(n)))
\end{align}
and by
\cite[Proposition 1.2, Theorem 1.3]{AGM85} the map 
\[H_*(THH(X(n)))\to R_+(H_*THH(X(n)))\] 
induces an $Ext$-equivalence. Since we have proven that there is an isomorphism of complete $\mathcal{A}_*$-comodules
\begin{align}\label{key iso}
R_+(H_*THH(X(n)))\to H_*THH(X(n))^{tC_p},
\end{align}
there is an isomorphism between the $E_2$-terms of \eqref{ASS1} and \eqref{ASS2}.  The result then follows from strong convergence of the inverse limit of Adams spectral sequence.
\end{proof}

By the isotropy separation diagram (see \cite[Prop. 4.1]{HM97}) as well as Tsalidis' theorem \cite{Tsa98,BBLNR07}, we conclude the following corollary. 
\begin{cor}\label{cor 1}
Let $k\ge 1$ and $m\ge 0$ be integers and fix a prime $p$. 
The map
\[ THH(X(k))^{C_{p^m}} \overset{\Gamma}{\lra} THH(X(k))^{hC_{p^m}}\]
is a $p$-adic equivalence.
\end{cor}

The map $\varphi_p \colon \thinspace THH(X(k)\to THH(X(k))^{tC_p}$ an $\mathbb{T}$ equivariant map because $THH(X(k))$ is cyclotomic in the sense of \cite{NS18}. Theorem \ref{main thm} therefore implies that the Tate-valued Frobenius map is an equivalence of Borel $\mathbb{T}$-equivariant spectra in the sense of \cite{NS18}, by \cite[Thm. 5.14.]{Joy08},  Consequently, we have the following corollary. 
\begin{cor}\label{cor 2}
The Tate-valued Frobenius map induces an equivalence
\[ \varphi_p \colon \thinspace TC^{-}(X(k);\mathbb{Z}_p)\simeq  TP(X(k);\mathbb{Z}_p)\]
for all primes $p$ and all integers $k\ge 1$. 
\end{cor}

\section{The Segal conjecture for topological Hochschild homology of $T(n)$}\label{T(n)}
We conclude by briefly describing analogous results for the spectrum $T(n)$ under the assumptions that $T(n)$ is an $E_2$ ring spectrum and that the canonical maps $T(n)\to BP$ are maps of $E_2$ ring spectra for each $n$. 
These assumptions are necessary in our argument because they imply $THH(T(n))$ is a ring spectrum and the map of strongly convergent homological Tate spectral sequences
\[
\xymatrix{
	\widehat{H}^{-*}(C_p;H_*(THH(T(n))))\ar[r] \ar@{=>}[d] &  \widehat{H}^{-*}(C_p;H_*(THH(BP)))\ar@{=>}[d] \\
	H_*^c(THH(T(n))^{tC_p})\ar[r] & H_*^c(THH(BP)^{tC_p})
}
\]
 is multiplicative. The assumption that the canonical maps $T(n)\to BP$ are $E_2$ ring spectrum maps can be possibly be weakened to knowing either that the canonical maps $T(n)\to BP$ are maps of $E_1$ ring spectra or that $T(n)$ is an $E_2$ ring spectrum for each $n>0$, but neither of theses results are known. 

We begin by discussing the plausibility of our assumption that the canonical maps $T(n)\to BP$ are $E_2$ ring spectrum maps for each $n\ge 0$. We can express $T(n)$ as the colimit
\[T(n)=\underset{\epsilon_k}{\colim}\thinspace X(k),\]
in the homotopy category where $\epsilon_k$ is the restriction of the Quillen idempotent $\epsilon \co  MU\to MU$ to $X(k)$ as defined in \cite[Lem 1.3.5]{Hop84} and $k$ satisfies $p^n\le k <p^{n-1}$. In particular, this implies that $T(n)$ is homotopy commutative and homotopy associative, see \cite[Thm. 6.5.1]{Rav86}. Work of Chadwick--Mandell ~\cite{CM15} shows that $\epsilon \co MU\to MU$ is a map of $E_2$ ring spectra and the map $MU\to BP$ is a map of $E_2$ ring spectra. To prove that $T(n)$ is an $E_2$ ring spectrum and the map $T(n)\to BP$ is a map of $E_2$ ring spectra, it would suffice to show that the map $\epsilon_k \co X(k)\to X(k)$ is a map of $E_2$ ring spectra for all $k\ge 1$. 
It  would follow that $T(n)$ is an $E_2$ ring spectrum because the colimit in $E_2$ ring spectra is computed as the colimit of underlying spectra. Since the idempotent $\epsilon_k$ is known to be compatible with the idempotent $\epsilon$, in the sense that for each $k$ there is a commutative diagram 
\[ 
	\xymatrix{
	X(k)\ar[d]_{\epsilon_k} \ar[r] & MU \ar[d]^{\epsilon} \\
	X(k) \ar[r] & MU, \\
	}
\]
it would also imply, by naturality of the colimit in the category of $E_2$ ring spectra, that the canonical maps
\[ T(n)\to BP \]
are maps of $E_2$ ring spectra.

Fix $k$ and $n$ such that $p^{n-1}\le k<p^n$ throughout this section. 
Let $\Ring(X,Y)$ denote the set of homotopy classes of maps of ring spectra $X \to Y$ in the stable homotopy category, and let $\eRing(A,B)$ be the space of $E_2$ ring maps $A \to B$. We would like to show that $\epsilon_k\in \Ring(X(k),X(k))$  pulls back to a class in $\pi_0(\eRing(X(k),X(k)))$ along the map 
\begin{align}\label{EquationStructure}
 \pi_0(\eRing(X(k),X(k))) \to \Ring(X(k),X(k)). 
\end{align}
By using methods from \cite[Sec. 6]{CM15}, we can identify 
\[ \pi_0(\eRing(X(k),X(k)))\cong \widetilde{sl}_1X(k)^2(BSU(k)) \] and 
\[ \Ring(X(k),X(k))\cong \widetilde{sl}_1X(k)^0(\mathbb{C}P^{k-1}).\] 
The map (\ref{EquationStructure}) is induced by the map 
\[ \Sigma^2 \mathbb{C}P^{k-1} \to \Sigma^2\Omega SU(k) \to B^2\Omega SU(k)\simeq BSU(k).\]
Therefore it suffices to examine the map of Atiyah-Hirzebruch spectral sequences, which is given on $E_2$ pages by
\[ H^{s+2}(BSU(k); \pi_{-t} \widetilde{sl}_1X(k)) \to H^{s}(\mathbb{C}P^{k-1}; \pi_{-t} \widetilde{sl}_1X(k)). \]
We can understand this map with integral coefficients
\[  H^{*+2}(BSU(k);\mathbb{Z})\to H^{s}(\mathbb{C}P^{k-1};\mathbb{Z}) \]
where $H^{*+2}(BSU(k);\mathbb{Z})\cong P(x_2,x_3,\ldots ,x_k)$ with $|x_i|=2i$ and $H^*(\mathbb{C}P^{k-1};\mathbb{Z})\cong P_k(u)$ with $|u|=2$.  By \cite[Prop. 6.3]{CM15}, the element $x_i$ maps to $(-1)^iu^i$ for all $1\le i\le k$ and all decomposables map to zero. Therefore, it suffices to understand the map of Atiyah-Hirzebruch spectral sequences modulo decomposables. We may determine the class $z$ detecting $\epsilon_k$ in the target spectral sequence and a class $\tilde{z}$ mapping to it from the source spectral sequence. Our goal then is to show that $\tilde{z}$ is a permanent cycle. 
 
In the work of Chadwick--Mandell, the class analogous  to $\tilde{z}$ is a permanent cycle for bidegree reasons; there are no possible targets for differentials because all of the spectra they consider have homotopy groups concentrated in even degrees. Since the homotopy groups $\pi_*(X(k))$ are not known to be concentrated in even degrees, we cannot rule out the possibility of $\tilde{z}$ supporting a long differential. We make the following assumption, which implies that $T(n)$ is an $E_2$ ring spectrum for each $n\ge 0$ and the canonical maps $T(n)\to BP$ are $E_2$ ring spectrum maps. 

\begin{assump}
The class $\tilde{z}$ is a permanent cycle in the Atiyah-Hirzebruch spectral sequence with abutment $\widetilde{sl}_1X(k)^*(BSU(k))$.
\end{assump}

Assuming that the canonical maps $T(n) \to BP$ are maps of $E_2$ ring spectra, we can prove the Segal Conjecture for $THH(T(n))$ by following the same strategy as we did for $THH(X(k))$. The homology $H_*(THH(T(n)))$ can be computed using the B{\"o}kstedt spectral sequence \eqref{bokstedt ss}. In this case, the B\"okstedt spectral sequence again collapses at the $E^2$ term because of the bidegrees of the algebra generators and the multiplcitive extensions may again be resolved using the $E_1$ Dyer--Lashof algebra. This part only uses the assumption that $T(n)$ is an $E_2$ ring spectrum and not the assumption that the canonical maps $T(n)\to BP$ are maps of $E_2$ ring spectra. One obtains
\[ H_*(THH(T(n)))\cong P(\overline{\xi}_1, \overline{\xi}_2,\ldots,\overline{\xi}_n)\otimes E(\sigma \overline{\xi}_1, \sigma \overline{\xi}_2,\ldots,\sigma \overline{\xi}_n). \] 

The continuous homology $H^c_*(THH(T(n))^{tC_p})$ can be computed using the homological Tate spectral sequence by comparison with the homological Tate spectral sequence converging to $H^c_*(THH(BP)^{tC_p})$ which was computed in ~\cite[Prop. 6.8]{LNR11}. Notably, the homological Tate spectral sequence for $BP$ still collapses at the $E^3$ term and the map of $E^3$-terms is still injective. The map of spectral sequences is also multiplicative under our assumptions, which allows us to determine multiplicative extensions in the source spectral sequence. This is where we use our assumption that the map $T(n)\to BP$ is a map of $E_2$ ring spectra.
Thus, there are isomorphisms
\[ H_*^c(THH(T(n))^{tC_p})\cong 
\begin{cases}
P(t^{\pm 1}) \otimes P(\overline{\xi}^2_1,\ldots,\overline{\xi}^2_n) \otimes E(\overline{\xi}_1 \sigma \overline{\xi}_1,\ldots,\overline{\xi}_n\sigma \overline{\xi}_n) \quad& \text{for }p=2,\\
E(h) \otimes P(t^{\pm 1}) \otimes P(\overline{\xi}_1^p,\ldots, \overline{\xi}_n^p)\otimes E(\overline{\xi}_1^{p-1}\sigma \overline{\xi}_1 ,\ldots,\overline{\xi}_n^{p-1}\sigma \overline{\xi}_n) \quad& \text{for } p>2
\end{cases}
\]
of continuous $\mathcal{A}_*$-comodule algebras.
For $p=2$, the degrees of the generators are $|t| = (-1,0)$, $|\overline{\xi}_i| = (0,2^{i+1}-2)$, and $|\sigma \overline{\xi}_i| = (0,2^{i+1}-1)$. For $p>2$, the degrees of the generators are $|h| = (-1,0)$, $|t| = (-2,0)$, $|\overline{\xi}_i| = (0,2p^i-2)$, and $|\sigma \overline{\xi}_i| = (0,2p^i-1)$. 

The homological Singer construction $R_+(H_*(THH(T(n))))$ can be identified using \cite[Thm. 5.9]{LNR12} as in \eqref{Singer const identification}. The identification with the homological Singer construction 
$$R_+(H_*(THH(T(n)))) \cong H_*^c(THH(T(n))^{tC_p})$$
as continuous $\mathcal{A}_*$ comodules
follows from a modification of the proof of \cite[Thm. 7.2]{LNR11}. This modification is similar to the modification proving the analogous isomorphism \eqref{key iso} for $THH(X(n))$ in Section \ref{SectionSinger}. One concludes using the strongly convergent inverse limit Adams spectral sequence that the Tate-valued Frobenius map
\[THH(T(n)) \overset{\Gamma}{\to} THH(T(n))^{tC_p}\]
is a $p$-adic equivalence.  The analogues of Corollary \ref{cor 1} and Corollary \ref{cor 2} also hold for $T(n)$ under our running assumptions for each $n\ge 0$ .



\end{document}